\newcommand{\E}{\mathbb E}
\newcommand{\tr}{\mathrm{tr}}
\newcommand{\ds}{\displaystyle}
\newtheorem{theorem}{Theorem}[section]
\theoremstyle{definition}
\theoremstyle{remark}
\numberwithin{equation}{section}
\begin{document}

\title[Timelike General Rotational Surfaces]{Basic Classes of Timelike General Rotational Surfaces in the Four-dimensional Minkowski Space}

\author{Victoria Bencheva, Velichka Milousheva}

\address{Institute of Mathematics and Informatics, Bulgarian Academy of Sciences,
Acad. G. Bonchev Str. bl. 8, 1113, Sofia, Bulgaria}
\email{viktoriq.bencheva@gmail.com}
\email{vmil@math.bas.bg}

\subjclass[2010]{Primary 53B30, Secondary 53A35, 53B25}
\keywords{general rotational surfaces, minimal surfaces, flat surfaces, flat normal connection, constant mean curvature, parallel normalized mean curvature vector field}

\begin{abstract}

In the present paper, we consider timelike general rotational surfaces in the Minkowski 4-space which are analogous to the general rotational surfaces in the Euclidean 4-space introduced by C. Moore. We study two types of such surfaces (with timelike and spacelike meridian curve, respectively) and describe analytically some of their basic geometric classes: flat timelike general rotational surfaces, timelike general rotational surfaces with flat normal connection, and timelike general rotational surfaces with non-zero constant mean curvature. We give explicitly all minimal timelike general rotational surfaces and all timelike general rotational surfaces with parallel normalized mean curvature vector field.

\end{abstract}

\maketitle

\section{Introduction}

Rotational surfaces are basic sources of examples of many geometric classes of surfaces both in Euclidean and pseudo-Euclidean spaces. In \cite{M}, C. Moore introduced a class of  surfaces in the four-dimensional Euclidean space $\mathbb R^4$ which generalized the rotational surfaces and described a special case of such surfaces with constant Gauss curvature \cite{M2}.

The analogue of these surfaces in the Minkowski 4-space was considered by G. Ganchev and the second author in \cite{GM6}, where spacelike  general rotational surfaces in $\mathbb R^4_1$ with plane meridian curves and special invariants were studied. The flat general rotational surfaces and the general rotational surfaces with flat normal connection were described analytically and  the minimal general rotational surfaces and the general rotational surfaces consisting of parabolic points were completely classified. Spacelike general rotational surfaces in  $\mathbb R^4_1$  with plane meridian curves  and having pointwise 1-type Gauss map  were studied by U. Dursun in \cite{Dur}. 

Analogously to the general rotational surfaces in the Euclidean 4-space and  in the Minkowski 4-space, in \cite{ATM}, Y. Aleksieva, N.-C. Turgay and the second author defined general rotational surfaces of elliptic and hyperbolic type in the pseudo-Euclidean 4-space with neutral metric  $\mathbb R^4_2$. Especially, Lorentz general rotational surfaces with plane  meridian curves were considered and the complete classification of some special geometric classes was given: minimal general rotational surfaces of elliptic and hyperbolic type,   general rotational surfaces with parallel normalized mean curvature vector field, flat  general rotational surfaces, and general rotational surfaces with flat normal connection.

In the present paper, we consider two types of timelike general rotational surfaces in the Minkowski 4-space $\mathbb R^4_1$ with meridian curves lying in 2-dimensional planes and describe analytically some of their basic geometric classes. In Theorem \ref{T:flat}  we describe all flat timelike general rotational surfaces of first and second type. In Theorem \ref{T:flat_normal_connection}  we describe the timelike general rotational surfaces  with flat normal connection. The  minimal general rotational surfaces of first and second type are explicitly determined in Theorem \ref{T:minimal}. In Theorem \ref{T:CMC} we classify the timelike general rotational surfaces with non-zero constant mean curvature. In the last section, we give explicitly all timelike general rotational surfaces with parallel normalized mean curvature vector field.

\section{Preliminaries}

Let $\mathbb R^4_1$  be the four-dimensional Minkowski space  endowed with the metric
$\langle , \rangle$ of signature $(3,1)$ and  $Oe_1e_2e_3e_4$ be a
fixed orthonormal coordinate system, i.e. $e_1^2 =
e_2^2 = e_3^2 = 1, \, e_4^2 = -1$, giving the orientation of
$\mathbb R^4_1$. The standard flat metric is given in local coordinates by
$dx_1^2 + dx_2^2 + dx_3^2 -dx_4^2.$

A surface $M^2: z = z(u,v), \, \, (u,v) \in {\mathcal D}$
(${\mathcal D} \subset \mathbb R^2$) in $\mathbb R^4_1$ is said to be
\emph{spacelike} if $\langle , \rangle$ induces  a Riemannian
metric $g$ on $M^2$. A surface $M^2$ is said to be
\emph{timelike} if the induced metric $g$ on $M^2$ is a metric with index 1. So,  at each point $p$ of a spacelike (resp. timelike) surface
$M^2$ we have the following decomposition:
$$\mathbb R^4_1 = T_pM^2 \oplus N_pM^2$$
with the property that the restriction of the metric $\langle ,
\rangle$ onto the tangent space $T_pM^2$ is of signature $(2,0)$ (resp.  $(1,1)$),
and the restriction of the metric $\langle , \rangle$ onto the
normal space $N_pM^2$ is of signature $(1,1)$ (resp. $(2,0)$).

Denote by $\widetilde{\nabla}$ and $\nabla$ the Levi Civita connections on $\mathbb R^4_1$ and $M^2$, respectively.
If $x$ and $y$ are vector fields tangent to $M^2$ and $\xi$ is a normal vector field, then we have the following formulas of Gauss and Weingarten:
$$\begin{array}{l}
\vspace{2mm}
\widetilde{\nabla}_xy = \nabla_xy + \sigma(x,y);\\
\vspace{2mm}
\widetilde{\nabla}_x \xi = - A_{\xi} x + D_x \xi,
\end{array}$$
which define the second fundamental tensor $\sigma$, the normal connection $D$
and the shape operator $A_{\xi}$ with respect to $\xi$.

The mean curvature vector  field $H$ of $M^2$ is defined as
$H = \ds{\frac{1}{2}\,  \tr\, \sigma}$. 
A normal vector field $\xi$ on a surface $M^2$ is called \emph{parallel in the normal bundle} (or simply \emph{parallel}) if $D{\xi}=0$  \cite{Chen}.
The surface $M^2$ is said to have \emph{parallel mean curvature vector field} if its mean curvature vector $H$ is parallel, i.e.
$D H =0$. The class of surfaces with parallel mean curvature vector field is naturally extended to the class of surfaces with parallel
normalized mean curvature vector field as follows: a surface  is said to have \textit{parallel normalized mean curvature vector field} if 
 $H$ is non-zero and  there exists a unit vector field in the direction of $H$ 
which is parallel in the normal bundle \cite{Chen-MM}.

\vskip 2mm
Let $M^2: z=z(u,v), \,\, (u,v) \in \mathcal{D}$ $(\mathcal{D} \subset \mathbb R^2)$  be a local parametrization on a
timelike surface in $\mathbb R^4_1$.
The tangent space $T_pM^2$ at an arbitrary point $p=z(u,v)$ of $M^2$ is  spanned by $z_u$ and $z_v$. We use the standard denotations
$E(u,v)=\langle z_u,z_u \rangle, \; F(u,v)=\langle z_u,z_v
\rangle, \; G(u,v)=\langle z_v,z_v \rangle$ for the coefficients
of the first fundamental form.
Since $M^2$ is timelike, without loss of generality we assume that
$\langle z_u,z_u \rangle < 0$, $\langle z_v,z_v \rangle > 0$.
Hence,
$E(u,v)<0, \; G(u,v)>0$ and we set $W=\sqrt{- EG+F^2}$.
We choose an orthonormal frame field $\{n_1, n_2\}$ of the normal bundle, i.e. $\langle
n_1, n_1 \rangle =1$, $\langle n_2, n_2 \rangle = 1$, $\langle n_1, n_2 \rangle = 0$.
Then we have the following derivative formulas:
\begin{equation}\label{E:Eq-1}
\begin{array}{l}
\vspace{2mm} \widetilde{\nabla}_{z_u}z_u=z_{uu} = - \Gamma_{11}^1 \, z_u +
\Gamma_{11}^2 \, z_v + c_{11}^1\, n_1 + c_{11}^2\, n_2;\\
\vspace{2mm} \widetilde{\nabla}_{z_u}z_v=z_{uv} = - \Gamma_{12}^1 \, z_u +
\Gamma_{12}^2 \, z_v + c_{12}^1\, n_1 + c_{12}^2\, n_2;\\
\vspace{2mm} \widetilde{\nabla}_{z_v}z_v=z_{vv} = - \Gamma_{22}^1 \, z_u +
\Gamma_{22}^2 \, z_v + c_{22}^1\, n_1 + c_{22}^2\, n_2;\\
\end{array} 
\end{equation}
where $\Gamma_{ij}^k$ are the Christoffel's symbols and the functions $c_{ij}^k, \,\, i,j,k = 1,2$  are given by
$$\begin{array}{lll}
\vspace{2mm}
c_{11}^1 = \langle z_{uu}, n_1 \rangle; & \qquad  c_{12}^1 = \langle z_{uv}, n_1 \rangle; &  \qquad  c_{22}^1 = \langle z_{vv}, n_1 \rangle; \\
\vspace{2mm}
c_{11}^2 = \langle z_{uu}, n_2 \rangle; & \qquad  c_{12}^2 = \langle z_{uv}, n_2 \rangle;& \qquad c_{22}^2 = \langle z_{vv}, n_2 \rangle.
\end{array}$$

It is obvious, that  $M^2$ lies in a two-dimensional plane if and only if
it is totally geodesic, i.e. $c_{ij}^k=0$ for all $i,j,k = 1, 2.$ 
Further, we assume that at least one of the coefficients $c_{ij}^k$ is not
zero.

Let us consider the following determinants:
$$
\Delta_1 = \left\vert%
\begin{array}{cc}
\vspace{2mm}
  c_{11}^1 & c_{12}^1 \\
  c_{11}^2 & c_{12}^2 \\
\end{array}%
\right\vert, \quad
\Delta_2 = \left\vert%
\begin{array}{cc}
\vspace{2mm}
  c_{11}^1 & c_{22}^1 \\
  c_{11}^2 & c_{22}^2 \\
\end{array}%
\right\vert, \quad
\Delta_3 = \left\vert%
\begin{array}{cc}
\vspace{2mm}
  c_{12}^1 & c_{22}^1 \\
  c_{12}^2 & c_{22}^2 \\
\end{array}%
\right\vert.
$$
At a given point $p \in M^2$, the \textit{first normal space} of $M^2$  in $\E^4_1$, denoted by
$\rm{Im} \, \sigma_p$, is the subspace given by
$${\rm Im} \, \sigma_p = {\rm span} \{\sigma(x, y): x, y \in T_p M^2 \}.$$

It is obvious, that the condition $\Delta_1 = \Delta_2 = \Delta_3 = 0$  characterizes points at which
the first normal space  ${\rm Im} \, \sigma_p$ is one-dimensional. Such points are called \textit{flat} (or \textit{inflection}) points of the surface \cite{Lane, Little}.
E. Lane \cite{Lane} has shown that  every point of a surface in  a 4-dimensional affine space $\mathbb{A}^4$ is an inflection point
if and only if the surface is developable or lies in a 3-dimensional space.
So, further we consider timelike surfaces free of inflection  points, i.e. we assume that $(\Delta_1, \Delta_2, \Delta_3) \neq (0,0,0)$.

\vskip 2mm
Now, we shall consider  a special class of surfaces in the Minkowski 4-space, which are called general rotational surfaces.

General rotational surfaces  in the Euclidean 4-space $\mathbb R^4$ were
introduced  by F. N. Cole \cite{Co} and later studied by  C. Moore \cite{M}. We present shortly the construction.  Let $c: x(u) = \left(
x^1(u), x^2(u),  x^3(u), x^4(u)\right)$; $ u \in J \subset \mathbb R$ be
a smooth curve in $\mathbb R^4$, and $\alpha$, $\beta$ be real constants. A
general rotation of the meridian curve $c$ in $\mathbb R^4$ is defined by
$$X(u,v)= \left( X^1(u,v), X^2(u,v),  X^3(u,v), X^4(u,v)\right),$$
where
$$\begin{array}{ll}
\vspace{2mm} 
X^1(u,v) = x^1(u)\cos\alpha v - x^2(u)\sin\alpha v; & \qquad X^3(u,v) = x^3(u)\cos\beta v - x^4(u)\sin\beta v; \\
\vspace{2mm} 
 X^2(u,v) = x^1(u)\sin\alpha v + x^2(u)\cos\alpha v; & \qquad X^4(u,v) = x^3(u)\sin\beta v + x^4(u)\cos\beta v,\\
\end{array}$$
$v \in [0;2\pi)$. The constants $\alpha$ and $\beta$ determine the rates of rotation.
In the case $\beta = 0$, $x^2(u) = 0$, the plane $Oe_3e_4$ is fixed and one gets
the classical rotation about a fixed two-dimensional axis. In \cite{M2}, C. Moore described a special case of general rotational surfaces with constant Gauss curvature.

In \cite{GM2-a}, the second author considered  a special case of such surfaces,
given by
\begin{equation} \label{E:Eq-5}
\mathcal{M}: z(u,v) = \left(
f(u) \cos\alpha v, f(u) \sin \alpha v, g(u) \cos \beta v, g(u)
\sin \beta v \right),
\end{equation}
where $u \in J \subset \mathbb R, \,\,  v \in [0;
2\pi)$, $f(u)$ and $g(u)$ are smooth functions, satisfying
$\alpha^2 f^2(u)+ \beta^2 g^2(u)
> 0 , \,\, f'\,^2(u)+ g'\,^2(u) > 0$, and $\alpha, \beta$ are positive
constants. In the case  $\alpha \neq \beta$ each parametric curve
$u = const$ is a curve in $\mathbb R^4$ with constant Frenet curvatures,
and in the case  $\alpha = \beta$ each parametric curve $u =
const$ is a circle. The parametric curves  $v = const$  are plane curves called the meridians of $\mathcal{M}$.

The surfaces defined by \eqref{E:Eq-5} are general rotational surfaces in
the sense of C. Moore with plane meridian curves. In \cite{GM2-a}, the second author found the
invariants of these surfaces and completely  classified the minimal
super-conformal general rotational surfaces in $\mathbb R^4$. 

\vskip 2mm
Similarly to the general rotations in $\mathbb R^4$ one can consider
general rotational surfaces  in the Minkowski 4-space $\mathbb R^4_1$.
 Let $c: x(u) = \left( x^1(u), x^2(u),  x^3(u), x^4(u)\right)$; $ u \in J \subset \mathbb R$ be
a smooth spacelike or timelike curve in $\mathbb R^4_1$, and $\alpha$, $\beta$ be real constants.
We consider the surface defined by
\begin{equation} \label{E:Eq-2}
X(u,v)= \left( X^1(u,v), X^2(u,v),  X^3(u,v), X^4(u,v)\right),
\end{equation}
where
\begin{equation} \label{E:Eq-3}
\begin{array}{ll}
\vspace{2mm} 
X^1(u,v) = x^1(u)\cos\alpha v - x^2(u)\sin\alpha v; & \qquad X^3(u,v) = x^3(u)\cosh\beta v + x^4(u)\sinh\beta v; \\
\vspace{2mm}
X^2(u,v) = x^1(u)\sin\alpha v + x^2(u)\cos\alpha v; & \qquad X^4(u,v) = x^3(u)\sinh\beta v + x^4(u)\cosh\beta v. \\
\end{array}
\end{equation}

In the case $\beta = 0$, $x^2(u) = 0$ (or  $x^1(u) = 0$)   one gets
the standard rotational surface (with two-dimensional axis) of elliptic type in $\mathbb R^4_1$.
A local classification of spacelike rotational surfaces of elliptic type, whose
mean curvature vector field is either vanishing or lightlike, was obtained in \cite{Haesen-Ort-2}.

In the case $\alpha = 0$, $x^3(u) = 0$  one gets
the standard hyperbolic rotational surface of first type, and in the case
$\alpha = 0$, $x^4(u) = 0$ we get the standard hyperbolic rotational surface of second type.
Spacelike rotational surfaces of hyperbolic type
with either vanishing or lightlike mean curvature vector field are classified in \cite{Haesen-Ort-1}.
In \cite{Liu-Liu-1}, the classification of timelike and spacelike hyperbolic rotational surfaces
with non-zero constant mean curvature in the three-dimensional de Sitter space
$\mathbb{S}^3_1$ is given.
Spacelike and timelike Weingarten rotational surfaces in $\mathbb{S}^3_1$
are studied in \cite{Liu-Liu-2}.
The class of Chen spacelike rotational surfaces of hyperbolic or elliptic type in $\mathbb R^4_1$ is described in \cite{GM3}.  
Timelike rotational surfaces with two-dimensional axis of hyperbolic,  elliptic, and parabolic type in $\mathbb R^4_1$ were studied in \cite{Bek-Dur}, where  such surfaces with pointwise 1-type Gauss map of first and second kind  were considered.

In the case $\alpha > 0$ and $\beta > 0$ the surfaces defined by \eqref{E:Eq-2} and \eqref{E:Eq-3} are analogous to the general rotational surfaces of C. Moore in $\mathbb R^4$.
In \cite{GM5}  and \cite{GM6}, G. Ganchev and the second author considered  spacelike general rotational surfaces with plane meridian curves in $\mathbb R^4_1$ and  described analytically some basic geometric classes of these surfaces.

\vskip 2mm
In the present paper,  we  study timelike general rotational surfaces in the Minkowski space $\mathbb R^4_1$ with plane meridian curves and describe analytically some of their basic geometric classes, namely:  flat surfaces, surfaces with flat normal connection, minimal surfaces, surfaces with non-zero constant mean curvature, and surfaces
with parallel normalized mean curvature vector field.

\section{Timelike general rotational surfaces with plane meridian curves} 

 We consider a surface $\mathcal{M}_1$  in $\mathbb R^4_1$ parametrized by
\begin{equation} \label{E:Eq-6}
\mathcal{M}_1: z(u,v) = \left( f(u) \cos \alpha v, f(u) \sin \alpha v, g(u) \sinh \beta v, g(u) \cosh \beta v \right),
\end{equation}
where $u \in J \subset \mathbb R$, $v \in [0; 2\pi)$, $f(u)$ and $g(u)$ are smooth functions, satisfying the conditions 
$$f'\,^2(u)- g'\,^2(u) < 0, \quad \alpha^2 f^2(u)+ \beta^2 g^2(u) > 0,$$
and $\alpha, \beta$ are positive constants. This is a general rotational surface with plane meridian curves, defined by  \eqref{E:Eq-2}  and   \eqref{E:Eq-3} in the case $x_2(u) =0$ and $x_3(u) =0$. The meridian curve  $c: x(u) = \left(f(u), 0,  0, g(u)\right)$; $ u \in J \subset \mathbb R$  is timelike. 

The coefficients of the first fundamental form of $\mathcal{M}_1$ are
$$E = f'\,^2(u)- g'\,^2(u); \quad F = 0; \quad G =\alpha^2 f^2(u)+ \beta^2 g^2(u).$$
$\mathcal{M}_1$ is a timelike surface in $\mathbb R^4_1$, since $E<0, \; G>0$. We call it  a \textit{general rotational surface of first type}.

We consider the tangent frame field $\{x, y\}$ defined by:
\begin{equation} \label{E:Eq-xy}
x = \frac{z_u}{\sqrt{g'\,^2 -f'\,^2}}; \qquad y = \frac{z_v}{\sqrt{\alpha^2 f^2 + \beta^2 g^2}}.
\end{equation}
Obviously, $\langle x, x \rangle = -1; \, \langle y, y \rangle = 1; \, \langle x, y \rangle = 0$.
Let us consider the following normal frame field of $\mathcal{M}_1$:
\begin{equation} \label{E:Eq-normal}
\begin{array}{l}
\vspace{2mm} n_1 = \ds{\frac{1}{\sqrt{\alpha^2 f^2 + \beta^2 g^2}}\left( \beta g \sin \alpha v,- \beta g \cos \alpha v, \alpha
f \cosh \beta v,  \alpha f \sinh \beta v\right)};\\
\vspace{2mm} n_2 = \ds{\frac{1}{\sqrt{g'\,^2 -f'\,^2}}\left(g'
\cos \alpha v, g' \sin \alpha v,  f' \sinh \beta v, f' \cosh \beta
v  \right)}.
\end{array}
\end{equation}
It can easily be seen that $\langle n_1, n_1 \rangle = \langle n_2, n_2 \rangle = 1; \langle n_1, n_2 \rangle =0$.
Hence, $\{x,y, n_1, n_2\}$ is an orthonormal moving frame field of the surface  $\mathcal{M}_1$.

By direct computation we obtain the second partial derivatives of $z(u,v)$:
\begin{equation} \label{E:Eq-second}
\begin{array}{l}
\vspace{2mm} 
z_{uu}= \left( f''(u) \cos \alpha v, f''(u) \sin \alpha v, g''(u) \sinh \beta v, g''(u) \cosh \beta v \right);\\
\vspace{2mm} 
z_{uv}= \left( - \alpha f'(u)  \sin \alpha v, \alpha  f'(u) \cos \alpha v,  \beta g'(u) \cosh \beta v,  \beta g'(u) \sinh \beta v \right);\\
\vspace{2mm} 
z_{vv}= \left( - \alpha^2 f(u) \cos \alpha v,  - \alpha^2 f(u) \sin \alpha v, \beta^2 g(u) \sinh \beta v, \beta^2 g(u) \cosh \beta v \right).
\end{array}
\end{equation}
Formulas \eqref{E:Eq-normal} and \eqref{E:Eq-second} imply that the functions $c^k_{ij}$, $i,j,k =1,2$ are expressed as follows:
\begin{equation} \label{E:Eq-4}
\begin{array}{lll}
\vspace{2mm}
c_{11}^1 = 0; & \qquad c_{12}^1 = \ds{\frac{\alpha \beta (f g'- f'g)}{\sqrt{\alpha^2 f^2 + \beta^2 g^2}}};  & \qquad c_{22}^1 = 0; \\
\vspace{2mm}
c_{11}^2 = \ds{\frac{f'' g'- f'g''}{\sqrt{g'\,^2 -f'\,^2}}};  & \qquad c_{12}^2 = 0;  & \qquad c_{22}^2 = - \ds{\frac{\alpha^2 f g'+ \beta^2 f'g}{\sqrt{g'\,^2 -f'\,^2}}}.
\end{array} 
\end{equation}
Taking into consideration \eqref{E:Eq-xy} and \eqref{E:Eq-4}, we obtain that the second fundamental tensor $\sigma$ is expressed by:
$$
\begin{array}{l}
\sigma(x, x) = \ds{\frac{f'' g'- f'g''}{(g'\,^2 -f'\,^2)^{\frac{3}{2}}} \, n_2};\\
\sigma(x, y) = \ds{\frac{\alpha \beta (f g'- f'g)}{\sqrt{g'\,^2 -f'\,^2}(\alpha^2 f^2 + \beta^2 g^2)}\, n_1};\\
\sigma(y, y) = - \ds{\frac{\alpha^2 f g'+ \beta^2 f'g}{\sqrt{g'\,^2 -f'\,^2}(\alpha^2 f^2 + \beta^2 g^2)} \, n_2}.
\end{array} 
$$
Using the last formulas, we obtain the Gauss curvature $K$ and the mean curvature vector field $H$ of $\mathcal{M}_1$ expressed by the functions $f(u), g(u)$  and their derivatives:
\begin{equation} \label{E:Eq-K}
K = \frac{\left( f'' g'- f'g'' \right)\left( \alpha^2 f g'+ \beta^2 f'g \right)(\alpha^2 f^2 + \beta^2 g^2)+ \alpha^2 \beta^2 \left( f g'- f'g \right)^2 (g'\,^2 - f'\,^2)}{(g'\,^2 - f'\,^2)^2 (\alpha^2 f^2 + \beta^2 g^2)^2};
\end{equation}
\begin{equation} \label{E:Eq-H}
H = \frac{\left( f'g'' - f'' g' \right) (\alpha^2 f^2 + \beta^2 g^2) - \left( \alpha^2 f g'+ \beta^2 f'g \right) (g'\,^2 - f'\,^2)}{2(\alpha^2 f^2 + \beta^2 g^2)(g'\,^2-f'\,^2)^{\frac{3}{2}}} \, n_2.
\end{equation}
\vskip 2mm

\vskip 3mm
In a similar way,  we  consider the
surface $\mathcal{M}_2$ in $\mathbb R^4_1$ parametrized by
\begin{equation} \label{E:Eq-8}
\mathcal{M}_2: z(u,v) = \left( f(u) \cos \alpha v, f(u) \sin \alpha v, g(u) \cosh \beta v, g(u) \sinh \beta v \right),
\end{equation}
where $u \in J$, $v \in [0; 2\pi)$, $f(u)$ and $g(u)$ are smooth functions, satisfying the inequalities
$$\alpha^2 f^2(u)- \beta^2 g^2(u) < 0, \quad f'\,^2(u)+ g'\,^2(u) > 0,$$
and $\alpha, \beta$ are positive constants.
The surface, defined by \eqref{E:Eq-8}, is a general rotational surface  for which $x_2(u) =0$ and $x_4(u) =0$. In this case, the meridian curve  $c: x(u) = \left(f(u), 0,  g(u), 0\right)$; $ u \in J \subset \mathbb R$   is spacelike. 

The coefficients of the first fundamental form
of $\mathcal{M}_2$ are
$$E = f'\,^2(u)+ g'\,^2(u); \quad F = 0; \quad G = \alpha^2 f^2(u)- \beta^2 g^2(u).$$
$\mathcal{M}_2$ is a timelike surface in $\mathbb R^4_1$, since $E>0, \; G<0$. We call it  a \textit{general rotational surface of second type}.

We consider the tangent frame field $\{x, y\}$ defined by:
\begin{equation} \label{E:Eq-xy-2}
x = \frac{z_u}{\sqrt{f'\,^2 + g'\,^2}}; \qquad y = \frac{z_v}{\sqrt{\beta^2 g^2 - \alpha^2 f^2}}.
\end{equation}
Obviously, in this case, $\langle x, x \rangle = 1; \, \langle y, y \rangle = - 1; \, \langle x, y \rangle = 0$.
We choose the following normal frame field of $\mathcal{M}_2$:
\begin{equation} \label{E:Eq-normal-2}
\begin{array}{l}
\vspace{2mm}
n_1 = \ds{\frac{1}{\sqrt{f'\,^2 + g'\,^2}}\left(g' \cos \alpha v, g' \sin \alpha v, - f' \cosh \beta v, - f' \sinh \beta v \right)};\\
\vspace{2mm} n_2 = \ds{\frac{1}{\sqrt{\beta^2 g^2 - \alpha^2 f^2}}\left( - \beta g \sin \alpha v, \beta g \cos \alpha v, \alpha
f \sinh \beta v,  \alpha f \cosh \beta v \right)},
\end{array}
\end{equation}
which satisfies $\langle n_1, n_1 \rangle = \langle n_2, n_2 \rangle = 1; \langle n_1, n_2 \rangle =0$.

As in the previous case, by calculating the second partial derivatives of $z(u,v)$ and the functions $c^k_{ij}$, $i,j,k = 1,2$, we obtain the 
second fundamental tensor $\sigma$, which in this case is expressed by:
$$
\begin{array}{l}
\sigma(x, x) = \ds{\frac{f'' g'- f'g''}{(f'\,^2+g'\,^2)^{\frac{3}{2}}} \, n_1};\\
\sigma(x, y) = \ds{\frac{\alpha \beta (f'g- f g')}{\sqrt{f'\,^2+g'\,^2}(\beta^2 g^2 - \alpha^2 f^2)}\, n_2};\\
\sigma(y, y) = - \ds{\frac{\alpha^2 f g'+ \beta^2 f'g}{(f'\,^2 + g'\,^2)\sqrt{\beta^2 g^2 - \alpha^2 f^2}} \, n_1}.
\end{array} 
$$
By use of these formulas we obtain that the Gauss curvature $K$ and the mean curvature vector field $H$ of $\mathcal{M}_2$ are expressed by the functions $f(u), g(u)$ and their derivatives, as follows:
\begin{equation} \label{E:Eq-K2}
K = \ds{\frac{ \alpha^2 \beta^2 \left( f'g - fg'\right)^2 (f'\,^2+ g'\,^2) -  \ds{(f'' g' - f'g'')(\alpha^2 f^2 - \beta^2 g^2)}\left( \alpha^2 f g'+ \beta^2 f'g \right)}{(f'\,^2+ g'\,^2)^2 (\alpha^2 f^2 - \beta^2 g^2)^2}};
\end{equation}
\begin{equation} \label{E:Eq-H2}
H = \ds{\frac{ \left(   f'g'' - f'' g' \right) (\alpha^2 f^2 - \beta^2 g^2) + \left( \alpha^2 f g'+ \beta^2 f'g \right) (f'\,^2+ g'\,^2)  }{2(\alpha^2 f^2 - \beta^2 g^2)(\sqrt{ g'\,^2+f'\,^2})^3}} \,n_1.
\end{equation}

\vskip 2mm
In the following sections we shall describe some basic classes of timelike general rotational surfaces of first and second type, like flat surfaces, surfaces with flat normal connection, minimal surfaces, surfaces of constant mean curvature.

\section{Flat timelike general rotational surfaces}
Let ${\mathcal M}_1$ and ${\mathcal M}_2$ be timelike general rotational surfaces of first and second type, defined by  \eqref{E:Eq-6} and  \eqref{E:Eq-8}, respectively. A surface is called \textit{flat} if the Gauss curvature $K$ is zero. 
In the next statement, we describe analytically all flat timelike general rotational surfaces of first and second type.

\begin{theorem} \label{T:flat}
(i) The timelike general rotational surface of first type  is flat if and only if, up to parametrization, the meridian curve is determined by $c: x(u) = \left(f(u), 0,  0, u\right)$, where $f(u)$ is a solution to the following differential equation:
\begin{equation} \label{E:Eq-GRS1-flat-cond}
\left ( \ln \left \vert \frac{1+f'}{1-f'} \right \vert \right )' =  \frac{-2\alpha^2 \beta^2 \left( f - uf' \right)^2}{\left( \alpha^2 f + \beta^2 uf' \right)(\alpha^2 f^2 + \beta^2 u^2)}.
\end{equation}

(ii) The timelike general rotational surface of second type is flat if and only if, up to parametrization, the meridian curve is determined by $c: x(u) = \left(f(u), 0,  u, 0\right)$, where $f(u)$ is a solution to the following differential equation:
\begin{equation} \label{E:Eq-GRS2-flat-cond}
(\arctan{f'})^{'} = \frac{\alpha^2 \beta^2 \left( uf' - f\right)^2}{(\alpha^2 f^2 - \beta^2 u^2)\left( \alpha^2 f+ \beta^2 uf' \right)}.
\end{equation}
\end{theorem}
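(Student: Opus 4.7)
The plan is to reduce the flatness condition $K=0$, with $K$ given by \eqref{E:Eq-K} and \eqref{E:Eq-K2}, to an ODE for $f$ alone by choosing a convenient parametrization of the meridian. For $\mathcal{M}_1$ the timelike condition $f'\,^2-g'\,^2<0$ forces $g'(u)\neq 0$, so I can re-parametrize the meridian by $\tilde u=g(u)$, obtaining $g(\tilde u)=\tilde u$ after relabeling; this is exactly the form $c: x(u)=(f(u),0,0,u)$ asserted in (i). For $\mathcal{M}_2$ the analogous re-parametrization is available on every open set where $g'\neq 0$, and one restricts to such a piece to obtain $c: x(u)=(f(u),0,u,0)$. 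In both cases the substitution $g=u$, $g'=1$, $g''=0$ collapses many terms in the numerator of $K$.

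For part (i), after this substitution the numerator of \eqref{E:Eq-K} becomes
$$f''\bigl(\alpha^2 f+\beta^2 uf'\bigr)(\alpha^2 f^2+\beta^2 u^2)+\alpha^2\beta^2(f-uf')^2(1-f'\,^2),$$
and setting this to zero and dividing through by the (nonzero) factor $\tfrac12(1-f'\,^2)(\alpha^2 f+\beta^2 uf')(\alpha^2 f^2+\beta^2 u^2)$, the left-hand side becomes $\dfrac{2f''}{1-f'\,^2}=\bigl(\ln\bigl|\tfrac{1+f'}{1-f'}\bigr|\bigr)'$, which is precisely the left member of \eqref{E:Eq-GRS1-flat-cond}; the right-hand side takes the required form automatically.

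For part (ii), substituting $g=u$ into the numerator of \eqref{E:Eq-K2} and setting it to zero produces
$$f''(\alpha^2 f^2-\beta^2 u^2)(\alpha^2 f+\beta^2 uf')=\alpha^2\beta^2(uf'-f)^2(1+f'\,^2).$$
Dividing through by $(1+f'\,^2)(\alpha^2 f^2-\beta^2 u^2)(\alpha^2 f+\beta^2 uf')$ and using the calculus identity $\dfrac{f''}{1+f'\,^2}=(\arctan f')'$ yields \eqref{E:Eq-GRS2-flat-cond}. Both equivalences are manifestly reversible, which also gives the sufficiency direction.

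The computation itself is routine; the main (essentially cosmetic) obstacle is the parametrization reduction for the second type, where the genericity condition $g'\neq 0$ has to be imposed—on the locus $\{g'=0\}$ one would instead parametrize by $\tilde u=f(u)$, but since the theorem is stated ``up to parametrization'', it suffices to produce one good reparametrization on a generic open set.
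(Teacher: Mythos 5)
Your proposal is correct and follows essentially the same route as the paper: set the numerator of $K$ from \eqref{E:Eq-K} (resp. \eqref{E:Eq-K2}) to zero, normalize the meridian by $g=u$, and recognize $\frac{2f''}{1-f'\,^2}=\bigl(\ln\bigl|\frac{1+f'}{1-f'}\bigr|\bigr)'$ and $\frac{f''}{1+f'\,^2}=(\arctan f')'$. Your explicit justification of the reparametrization (noting that $f'\,^2-g'\,^2<0$ forces $g'\neq 0$ for the first type, while for the second type one restricts to a generic open set) is a small refinement of the paper's ``without loss of generality'' step, but the argument is otherwise identical.
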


\begin{proof}
\textit{(i)} Let ${\mathcal M}_1$  be a  timelike general rotational surface of first  type, defined by  \eqref{E:Eq-6}. 
Using formula  \eqref{E:Eq-K} for the Gauss curvature of ${\mathcal M}_1$, we obtain that $K=0$ if and only if the functions $f(u)$ and $g(u)$ satisfy the equality
\begin{equation} \label{E:Eq-GRS1-flat}
\alpha^2 \beta^2 \left( f g'- f'g \right)^2 (f'\,^2- g'\,^2) = \left( f'' g'- f'g'' \right)\left( \alpha^2 f g'+ \beta^2 f'g \right)(\alpha^2 f^2 + \beta^2 g^2).
\end{equation} 
Without loss of generality we may assume that the meridian curve is parametrized by $f = f(u); \,\, g=u$. 
Then,  equation \eqref{E:Eq-GRS1-flat} takes the form  
$$
\frac{f''}{(1-f'\,^2)}  =  \frac{-\alpha^2 \beta^2 \left( f - uf' \right)^2}{\left( \alpha^2 f + \beta^2 uf' \right)(\alpha^2 f^2 + \beta^2 u^2)},
$$
 which is equivalent to \eqref{E:Eq-GRS1-flat-cond}.

\vskip 2mm
\textit{(ii)} Analogously, if  ${\mathcal M}_2$ is a  timelike general rotational surface of second type, defined by   \eqref{E:Eq-8}, then it follows from \eqref{E:Eq-K2} that $K=0$ if and only if $f(u)$ and $g(u)$ satisfy
\begin{equation} \label{E:Eq-GRS2-flat}
 \alpha^2 \beta^2 \left( f'g - fg'\right)^2 (f'\,^2+ g'\,^2) =  \ds{(f'' g' - f'g'')(\alpha^2 f^2 - \beta^2 g^2)}\left( \alpha^2 f g'+ \beta^2 f'g \right).
\end{equation}
Again we assume that the meridian curve is parametrized by $f = f(u); \,\, g=u$. Then, equation \eqref{E:Eq-GRS2-flat} takes the following form
$$
\frac{f''}{1 + f'\,^2} = \frac{\alpha^2 \beta^2 \left( uf' - f\right)^2}{(\alpha^2 f^2 - \beta^2 u^2)\left( \alpha^2 f+ \beta^2 uf' \right)}, 
$$ 
which is equivalent to  \eqref{E:Eq-GRS2-flat-cond}.
\end{proof}

\section{Timelike general rotational surfaces with flat normal connection}

A surface is said to have \textit{flat normal connection} if the curvature of the normal connection  is zero. 
The curvature of the normal connection $\varkappa$ is expressed by the formula
$\varkappa = \ds{\frac{\langle R^D(x,y, n_1), n_2\rangle}{\langle x, x \rangle  \langle y, y \rangle -\langle x, y \rangle^2}}$,
where $R^D$ is the curvature tensor associated with the normal connection $D$, i.e.
$R^D(x,y,n_1) = D_xD_yn_1 - D_yD_xn_1- D_{[x, y]}n_1$.

\vskip 1mm
Now, let  ${\mathcal M}_1$  be a timelike general rotational surface of first  type, defined by  \eqref{E:Eq-6}. Then, using formula 
\eqref{E:Eq-normal} for the normal frame field of ${\mathcal M}_1$, by direct computations we obtain that:
$$
\begin{array}{l}
\langle \widetilde{\nabla}_x n_1, n_2 \rangle = \langle \widetilde{\nabla}_x n_2, n_1 \rangle = 0; \\
 \langle \widetilde{\nabla}_y n_1, n_2 \rangle = \ds{\frac{\alpha \beta (g g' - f f')}{\sqrt{g'\,^2 -f'\,^2}(\alpha^2 f^2 + \beta^2 g^2)}}; \\ 
\langle \widetilde{\nabla}_y n_2, n_1 \rangle = -\ds{\frac{\alpha \beta (g g' - f f')}{\sqrt{g'\,^2 -f'\,^2}(\alpha^2 f^2 + \beta^2 g^2)}}.
\end{array}
$$
Hence, for the normal connection $D$ of ${\mathcal M}_1$ we have the formulas:
\begin{equation} \label{E:Eq-normal_con}
\begin{array}{ll}
\vspace{2mm}
D_x n_1 = 0; & \quad D_x n_2 = 0;\\
\vspace{2mm}
D_y n_1 = \ds{\frac{\alpha \beta (g g' - f f')}{\sqrt{g'\,^2 -f'\,^2}(\alpha^2 f^2 + \beta^2 g^2)} \, n_2}; & \quad  D_y n_2 = \ds{-\frac{\alpha \beta (g g' - f f')}{\sqrt{g'\,^2 -f'\,^2}(\alpha^2 f^2 + \beta^2 g^2)} \, n_1}.
\end{array} 
\end{equation}
Since the Levi-Civita connection $\widetilde{\nabla}$ is flat, the commutator $[x,y]$ can be calculated by the formula $[x,y] =\widetilde{\nabla}_xy -\widetilde{\nabla}_yx$. 
Having in mind that $x = \ds{\frac{1}{\sqrt{g'\,^2 -f'\,^2}} \,z_u}; \; y = \ds{\frac{1}{\sqrt{\alpha^2 f^2 + \beta^2 g^2}} \,z_v}$, we get 
\begin{equation} \label{E:Eq-com}
[x,y] = \frac{\alpha^2f f' + \beta^2 g g'}{\sqrt{g'\,^2 - f'\,^2}(\alpha^2 f^2 + \beta^2 g^2)}\, y.
\end{equation}
Now, taking into consideration formulas \eqref{E:Eq-normal_con} and \eqref{E:Eq-com}, by long but direct computations we obtain that the curvature of the normal connection $\varkappa$ of ${\mathcal M}_1$ is expressed by the following formula:
\begin{equation} \label{E:Eq-normal_curvature}
\varkappa =\frac{ \alpha \beta ( f g'- f'g) \left((f'\,^2- g'\,^2) (\alpha^2 f g'+ \beta^2 f'g ) + (f'' g'- f'g'')(\alpha^2 f^2 + \beta^2 g^2)\right)}{ (f'\,^2- g'\,^2)^2(\alpha^2 f^2 + \beta^2 g^2)^2}.
\end{equation}

In a similar way, we calculate the curvature of the normal connection of the timelike general rotational surface of second  type 
${\mathcal M}_2$. The normal frame field of ${\mathcal M}_2$ is determined by \eqref{E:Eq-normal-2} and by direct computations we get the following formulas for the normal connection $D$:
\begin{equation} \label{E:Eq-normal_con-2}
\begin{array}{ll}
\vspace{2mm}
D_x n_1 = 0; & \quad D_x n_2 = 0;\\
\vspace{2mm}
D_y n_1 = \ds{\frac{\alpha \beta (f f'+ g g')}{\sqrt{f'\,^2 + g'\,^2}(\beta^2 g^2 - \alpha^2 f^2)} \, n_2}; & \quad  D_y n_2 = \ds{-\frac{\alpha \beta (f f'+ g g')}{\sqrt{f'\,^2 + g'\,^2}(\beta^2 g^2 - \alpha^2 f^2)} \, n_1}.
\end{array} 
\end{equation}
Using \eqref{E:Eq-normal_con-2} we obtain that the  curvature of the normal connection of ${\mathcal M}_2$ is expressed by the formula:
\begin{equation} \label{E:Eq-normal_curvature-2}
\varkappa = \frac{ \alpha \beta (fg' - f'g) \left((f'\,^2+ g'\,^2) (\alpha^2 f g'+ \beta^2 f'g ) + (f'' g'- f'g'')(\alpha^2 f^2 - \beta^2 g^2)\right)}{ (f'\,^2+ g'\,^2)^2(\alpha^2 f^2 - \beta^2 g^2)^2}.
\end{equation}

\vskip 2mm
In the next theorem, we describe analytically all timelike general rotational surfaces of first and second type with flat normal connection.

\begin{theorem} \label{T:flat_normal_connection}
(i) The timelike general rotational surface of first type has flat normal connection if and only if, up to parametrization, the meridian curve is determined by $c: x(u) = \left(f(u), 0,  0, u\right)$, where $f(u)$ is a solution to the following differential equation:
\begin{equation} \label{E:Eq-GRS1-flat-normal-con-cond}
\left ( \ln \left \vert \frac{1+f'}{1-f'} \right \vert \right )' = \frac{2(\alpha^2 f + \beta^2 u f') }{\alpha^2 f^2 + \beta^2 u^2}.
\end{equation}

(ii) The timelike general rotational surface of second type has flat normal connection if and only if, up to parametrization, the meridian curve is determined by $c: x(u) = \left(f(u), 0,  u, 0\right)$, where $f(u)$ is a solution to the following differential equation:
\begin{equation} \label{E:Eq-GRS1-flat-normal-con-cond-2}
(\arctan f')' = \frac{\alpha^2 f + \beta^2 u f'}{\beta^2 u^2  - \alpha^2 f^2  }.
\end{equation} 
\end{theorem}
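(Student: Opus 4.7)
The plan is to reduce each statement to a direct algebraic simplification of the already-computed curvature $\varkappa$ of the normal connection. For part \textit{(i)}, I would start from formula \eqref{E:Eq-normal_curvature}, which displays the numerator of $\varkappa$ as the product of $\alpha\beta(fg'-f'g)$ with a bracketed factor. Setting $\varkappa=0$ then leaves exactly two cases: either $fg'-f'g\equiv 0$, or the bracketed factor vanishes identically.

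The first preparatory task is to dispose of the degenerate branch $fg'-f'g\equiv 0$. From this identity, dividing by $g^2$ shows $(f/g)'=0$, so $f=cg$ for a constant $c$, and consequently $f''g'-f'g''=cg''g'-cg'g''=0$ as well. Inspecting \eqref{E:Eq-4} then reveals that all six coefficients $c^k_{ij}$ vanish simultaneously, forcing $(\Delta_1,\Delta_2,\Delta_3)=(0,0,0)$; hence every point of the surface would be an inflection point, contradicting the running assumption imposed in Section 2. So only the second case survives.

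Next I would reparametrize the meridian curve by taking $g(u)=u$; this is legitimate because the signature condition $f'^{\,2}-g'^{\,2}<0$ built into $\mathcal{M}_1$ forces $g'\neq 0$ throughout $J$, making $g$ a local diffeomorphism. Substituting $g=u$, $g'=1$, $g''=0$ into the surviving factor of \eqref{E:Eq-normal_curvature} produces
$$(f'^{\,2}-1)(\alpha^2 f+\beta^2 u f')+f''(\alpha^2 f^2+\beta^2 u^2)=0,$$
and the concluding move is to recognize $2f''/(1-f'^{\,2})=\bigl(\ln\bigl|(1+f')/(1-f')\bigr|\bigr)'$, which rewrites the identity as \eqref{E:Eq-GRS1-flat-normal-con-cond}.

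Part \textit{(ii)} follows the same three-step scheme, now starting from \eqref{E:Eq-normal_curvature-2}. The same argument eliminates the $fg'-f'g\equiv 0$ branch (the corresponding vanishing of $c^k_{ij}$ from formulas analogous to \eqref{E:Eq-4} again forces inflection points everywhere); the reparametrization $g(u)=u$ is applied on the open set where $g'\neq 0$; and the closing step uses $(\arctan f')'=f''/(1+f'^{\,2})$ to turn the resulting equation into \eqref{E:Eq-GRS1-flat-normal-con-cond-2}. The main obstacle is purely bookkeeping: correctly isolating $fg'-f'g$ as a factor of the bracketed numerator of $\varkappa$ so that the reduction to the stated first-order ODE is clean. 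Once the factorization and the elimination of the inflection-type branch are in hand, everything else is a routine identification with the derivatives of $\ln|(1+f')/(1-f')|$ and $\arctan f'$.
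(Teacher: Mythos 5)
Your proposal follows essentially the same route as the paper: set the already-computed normal curvatures \eqref{E:Eq-normal_curvature} and \eqref{E:Eq-normal_curvature-2} to zero, drop the factor $\alpha\beta(fg'-f'g)$, put $g=u$, and recognize $2f''/(1-f'^{\,2})$ and $f''/(1+f'^{\,2})$ as the derivatives of $\ln\left\vert(1+f')/(1-f')\right\vert$ and $\arctan f'$. The paper discards the branch $fg'-f'g=0$ silently, so your explicit treatment of it is a welcome addition; it is justified because, by \eqref{E:Eq-4}, $c^1_{11}=c^1_{22}=c^2_{12}=0$ identically, so $c^1_{12}=0$ (i.e. $fg'-f'g=0$) already forces $\Delta_1=\Delta_2=\Delta_3=0$, contradicting the standing no-inflection-points hypothesis. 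One inaccuracy to fix: your claim that $fg'-f'g\equiv 0$ makes \emph{all six} coefficients $c^k_{ij}$ vanish is false --- with $f=cg$ one still has $c^2_{22}=-c(\alpha^2+\beta^2)gg'/\sqrt{g'^{\,2}-f'^{\,2}}$, which need not vanish --- but the conclusion you need (first normal space contained in $\mathrm{span}\{n_2\}$, hence inflection points everywhere) survives, since only the vanishing of the $c^1_{ij}$ is required for it.
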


\begin{proof}
\textit{(i)} Let ${\mathcal M}_1$  be a  timelike general rotational surface of first  type, defined by  \eqref{E:Eq-6}. 
Using formula  \eqref{E:Eq-normal_curvature}, we obtain that the curvature of the normal connection is zero if and only if 
the functions $f(u)$ and $g(u)$ satisfy the equality
\begin{equation} \label{E:Eq-GRS1-flat-normal-con}
\frac{f'' g'- g' f''}{g'\,^2 - f'\,^2 } = \frac{\alpha^2 f g'+ \beta^2 f'g }{\alpha^2 f^2 + \beta^2 g^2}. 
\end{equation}
Without loss of generality we may assume that the meridian curve is parametrized by $f = f(u); \,\, g=u$. 
Then,  equation \eqref{E:Eq-GRS1-flat-normal-con} takes the form  
$$\frac{f''}{1- f'\,^2 } = \frac{\alpha^2 f + \beta^2 u f'}{\alpha^2 f^2 + \beta^2 u^2}, $$
which is equivalent to \eqref{E:Eq-GRS1-flat-normal-con-cond}.

\vskip 2mm
\textit{(ii)} Analogously, if  ${\mathcal M}_2$ is a  timelike general rotational surface of second type, defined by   \eqref{E:Eq-8}, then it follows from \eqref{E:Eq-normal_curvature-2} that $\varkappa = 0$ if and only if $f(u)$ and $g(u)$ satisfy
\begin{equation} \label{E:Eq-GRS2-flat-normal-con}
\frac{f'' g'- f'g''}{f'\,^2+ g'\,^2} = \frac{\alpha^2 f g'+ \beta^2 f'g}{\beta^2 g^2- \alpha^2 f^2 }.
\end{equation}
Again we assume that the meridian curve is parametrized by $f = f(u); \,\, g=u$. Then, equation \eqref{E:Eq-GRS2-flat-normal-con} takes the form
$$\frac{f''}{1 + f'\,^2} = \frac{\alpha^2 f + \beta^2 u f'}{\beta^2 u^2- \alpha^2 f^2 },$$
which is equivalent to \eqref{E:Eq-GRS1-flat-normal-con-cond-2}.
\end{proof}

\section{Minimal timelike general rotational surfaces}

In this section we shall find all minimal timelike general rotational surfaces of first and second type. Recall that a surface is minimal if and only if the normal mean curvature vector field $H$ vanishes. Let  ${\mathcal M}_1$  be a  timelike general rotational surface of first  type, defined by  \eqref{E:Eq-6}.  Using the expression  \eqref{E:Eq-H} for the mean curvature vector field $H$, we 
get that the timelike general rotational surface of  first type is minimal if and only if the functions $f(u)$ and $g(u)$ satisfy the following equality: 
\begin{equation} \label{E:Eq-GRS1-minimal}
\frac{f'' g' - f'g''}{g'^2-f'^2} = \frac{-(\alpha^2fg' + \beta^2gf')}{\alpha^2f^2+\beta^2g^2}.
\end{equation}

Similarly, the general rotational surface of  second type ${\mathcal M}_2$, defined by   \eqref{E:Eq-8}, is minimal if and only if 
$$\frac{f'' g' - f'g''}{f'^2+g'^2} = \frac{(\alpha^2fg' + \beta^2gf')}{\alpha^2f^2-\beta^2g^2}.$$
 
 In the following theorem, we describe explicitly the class of minimal timelike general rotational surfaces of first and second type.

\begin{theorem} \label{T:minimal}
(i) The timelike general rotational surface of first type is minimal if and only if, up to parametrization, the meridian curve is determined by $c: x(u) = \left(f(u), 0,  0, u\right)$, where $f(u)$ is  given by the formula 
\begin{equation} \label{E:Eq-30}
f =  \frac{\sqrt{A}}{\alpha} \sin \left (\varepsilon \frac{\alpha}{\beta} \ln \left \vert \beta u + \sqrt{A+\beta^2 u^2} \right \vert + C \right ); \quad A = const, \, C = const, \, \varepsilon = \pm 1.
\end{equation}
(ii) The timelike general rotational surface of  second type is minimal if and only if, up to parametrization, the meridian curve is determined by $c: x(u) = \left(f(u), 0,  u, 0\right)$, where $f(u)$ is  given by the formula 
\begin{equation} \label{E:Eq-34}
f =  \frac{\sqrt{A}}{\alpha} \sin \left (\varepsilon \frac{\alpha}{\beta} \ln \left \vert \beta u + \sqrt{\beta^2 u^2-A} \right \vert + C \right ); \quad A = const, \, C = const, \, \varepsilon = \pm 1.
\end{equation}
\end{theorem}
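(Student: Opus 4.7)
My plan is to reduce each case to an autonomous separable first-order equation by locating a first integral, and then to integrate that equation explicitly.

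For case (i), I would fix the natural parametrization $g(u)=u$ of the meridian, so that the timelike condition $g'^2-f'^2>0$ reduces to $|f'|<1$. Substituting into \eqref{E:Eq-GRS1-minimal} gives
\[
\frac{f''}{1-f'^2} \;=\; -\,\frac{\alpha^2 f + \beta^2 u f'}{\alpha^2 f^2 + \beta^2 u^2}.
\]
The key step, which I would verify by direct differentiation using this reduced equation, is that
\[
J(u) \;:=\; \frac{\alpha^2 f^2 + \beta^2 u^2 f'^2}{1-f'^2}
\]
is a first integral, so $J\equiv A$ for some positive constant $A$. Solving this algebraic relation for $f'$ yields the separable equation
\[
\frac{df}{\sqrt{A-\alpha^2 f^2}} \;=\; \pm\,\frac{du}{\sqrt{A+\beta^2 u^2}}.
\]
Integration using the standard antiderivatives $\int dt/\sqrt{A-\alpha^2 t^2} = \alpha^{-1}\arcsin(\alpha t/\sqrt{A})$ and $\int dt/\sqrt{A+\beta^2 t^2} = \beta^{-1}\ln|\beta t + \sqrt{A+\beta^2 t^2}|$ gives formula \eqref{E:Eq-30} after solving for $f$ and relabeling the constant; the sign $\varepsilon=\pm 1$ records the choice of branch.

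For case (ii) the argument is structurally identical. After taking $g(u)=u$, the minimality equation reads
\[
\frac{f''}{1+f'^2} \;=\; \frac{\alpha^2 f + \beta^2 u f'}{\alpha^2 f^2 - \beta^2 u^2}.
\]
The natural candidate for a first integral is now $\widetilde J(u) := (\alpha^2 f^2 + \beta^2 u^2 f'^2)/(1+f'^2)$. Setting $\widetilde J\equiv A$ leads to $f'^2 = (A-\alpha^2 f^2)/(\beta^2 u^2 - A)$, and the separated integration, now with $\int dt/\sqrt{\beta^2 t^2 - A} = \beta^{-1}\ln|\beta t + \sqrt{\beta^2 t^2 - A}|$, reproduces formula \eqref{E:Eq-34}.

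The main obstacle is finding the first integrals $J$ and $\widetilde J$; once they are in hand, the separation of variables and the integrations are textbook. To guess them, I would be guided by two observations already visible in the claimed formulas: the bound $|\alpha f|\leq\sqrt{A}$ implicit in the sine, which suggests that a conserved quantity should combine $\alpha^2 f^2$ and $\beta^2 u^2 f'^2$; and the denominators $1\mp f'^2$ that appear naturally on the left-hand sides of the reduced minimality conditions. The ansatz $(\alpha^2 f^2+\beta^2 u^2 f'^2)/(1\mp f'^2)$ then suggests itself, and a one-line computation of its derivative along the respective ODE confirms conservation in each case.
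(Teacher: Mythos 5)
Your proposal is correct and reaches the same formulas, but it obtains the crucial first integral by a genuinely different, more elementary route than the paper. The paper works with the moving frame $\{x,y,n_1,n_2\}$ and the structure equations coming from the flatness of $\widetilde{\nabla}$; under the minimality condition $\nu_1=\nu_2$ these give $\gamma=\tfrac{1}{4}\,x\bigl(\ln(\mu^2+\nu_1^2)\bigr)$, which combined with $\gamma=-x\bigl(\ln\sqrt{G}\bigr)$ yields the conservation law $G^2(\mu^2+\nu_1^2)=c^2$; via the identity $\alpha^2\beta^2(fg'-gf')^2+(\alpha^2fg'+\beta^2gf')^2=(\alpha^2+\beta^2)(\alpha^2f^2g'^2+\beta^2g^2f'^2)$ this is exactly your $J$ up to the factor $\alpha^2+\beta^2$ and the choice of parametrization (the paper normalizes by arc length $g'^2-f'^2=1$ and sets $g=u$ only at the end, whereas you set $g=u$ from the start). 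Your direct verification that $J=(\alpha^2f^2+\beta^2u^2f'^2)/(1-f'^2)$ is conserved along the reduced ODE does check out: the terms containing $f''$ in $N'D-ND'$ collect into $2f'f''(\alpha^2f^2+\beta^2u^2)$, and substituting the ODE cancels the remaining terms; the analogous computation works for $\widetilde{J}$ in case (ii). From $J\equiv A$ onward your separation of variables and integrals coincide with the paper's. The paper's derivation explains where the invariant comes from (it is forced by the Codazzi-type frame equations), while yours is shorter and frame-free at the cost of having to guess the ansatz, which you motivate adequately. One point worth a sentence in a polished write-up: replacing the second-order minimality equation by the first-order relation $J=A$ is an equivalence only where $f'\neq 0$, since differentiating $J=A$ recovers the ODE after cancelling a factor of $f'$; the paper's argument has the same implicit caveat, so this is not a gap relative to its standard.
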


\begin{proof}
\textit{(i)} Let  ${\mathcal M}_1$  be a  timelike general rotational surface of first  type, defined by  \eqref{E:Eq-6}. Using  \eqref{E:Eq-xy} and \eqref{E:Eq-second}, we obtain the following  Frenet-type derivative formulas with respect to the  frame field $\{x, y, n_1, n_2\}$, defined by \eqref{E:Eq-xy} and \eqref{E:Eq-normal}: 
$$
\begin{array}{ll}
\vspace{2mm} 
\widetilde{\nabla}_xx = \nu_1\,n_2; & \qquad \quad \widetilde{\nabla}_x n_1 =  -\mu\,y;\\
\vspace{2mm} 
\widetilde{\nabla}_xy =  \mu\,n_1;  & \qquad \quad \widetilde{\nabla}_x n_2 = \nu_1 \,x;\\
\vspace{2mm} 
\widetilde{\nabla}_yx = -\gamma\,y + \mu\,n_1;  & \qquad \quad \widetilde{\nabla}_y n_1= \mu\,x + \varphi\,n_2;\\
\vspace{2mm}
\widetilde{\nabla}_yy = - \gamma\,x + \nu_2\,n_2; & \qquad \quad \widetilde{\nabla}_y n_2= -\nu_2 \,y  -\varphi\,n_1,
\end{array}
$$
where  $\gamma$, $\mu$, $\nu_1$, $\nu_2$, $\varphi$ are smooth functions expressed in terms of $f(u)$ and $g(u)$ as follows:
\begin{equation} \label{Eq-functions-1}
\begin{array}{ll}
\vspace{2mm} \gamma = \ds{-\frac{\alpha^2 f f' + \beta^2 g g'}{\sqrt{g'^2-f'^2}(\alpha^2 f^2 + \beta^2 g^2)}}; &  
\qquad \mu = \ds{\frac{\alpha \beta (fg'-gf')}{\sqrt{g'^2-f'^2}(\alpha^2f^2+\beta^2g^2)}};\\
\vspace{2mm} \nu_1 = \ds{\frac{f'' g' - f'g''}{(\sqrt{g'^2-f'^2})^3}}; & \qquad 
\nu_2 = \ds{-\frac{\alpha^2fg' + \beta^2gf'}{\sqrt{g'^2-f'^2}(\alpha^2f^2+\beta^2g^2)}}; \\
\vspace{2mm} 
\varphi = \ds{\frac{\alpha \beta (gg'-ff')}{\sqrt{g'^2-f'^2}(\alpha^2f^2+\beta^2g^2)}}. &
\end{array}
\end{equation}
Using that the connection $\widetilde{\nabla}$ of $\mathbb R^4_1$ is flat, from $\widetilde{R}(x,y,x) = 0$ and  $\widetilde{R}(x,y,y) = 0$ we obtain that the functions $\gamma$, $\mu$, $\nu_1$, $\nu_2$, $\varphi$ satisfy the following equalities:
\begin{equation} \label{E:Eq-timelike-surf-equalities}
x(\mu) = 2\mu\gamma-\nu_1\varphi; \qquad
x(\nu_2) = \gamma (\nu_1+\nu_2)+\mu\varphi;\qquad
x(\gamma) = \nu_1 \nu_2 -\mu^2 +\gamma^2.
\end{equation}

In the case ${\mathcal M}_1$  is a minimal surface, the functions $\nu_1$ and $\nu_2$ satisfy the relation $\nu_1 = \nu_2$, which is equivalent to \eqref{E:Eq-GRS1-minimal}. In this case, from equalities \eqref{E:Eq-timelike-surf-equalities} we get
$$x(\mu) = 2\mu\gamma-\nu_1\varphi; \qquad
x(\nu_1) = 2\gamma \nu_1+\mu\varphi,$$
which implies that $\gamma = \displaystyle{\frac{1}{4}}\,x\left(\ln(\mu^2 + \nu_1^2)\right)$. 
On the other hand, $\gamma = -\langle \widetilde{\nabla}_y x, y \rangle$ and having in mind \eqref{E:Eq-xy} and $G = \langle z_v,z_v \rangle$, we calculate that $\gamma = -x\left(\ln\sqrt{G}\right)$.
Hence, we obtain the equation 
$$\displaystyle{\frac{1}{4}}x\left(\ln(\mu^2 + \nu_1^2)\right) + x\left(\ln\sqrt{G}\right) = 0,$$
which implies
$$
x\left(G^2(\mu^2 + \nu_1^2)\right) = 0.
$$
Now, using that the functions $\mu$, $\nu_1$, and $G$ depend only on the parameter $u$, we obtain
$G^2(\mu^2 + \nu_1^2) = c^2$
for some real constant $c$. 
Having in mind that  $G = \alpha^2 f^2 + \beta^2 g^2$ and the functions $\mu$ and $\nu_1$ are expressed in terms of $f$ and $g$ as given in \eqref{Eq-functions-1},  we obtain the equality 
$$
\ds{\frac{\alpha^2 \beta^2 (fg'-gf')^2 + (\alpha^2fg' + \beta^2gf')^2}{g'^2-f'^2}} = c^2,
$$
which implies
$$\ds{\frac{\alpha^2 f^2 g'^2 + \beta^2 g^2 f'^2}{g'^2-f'^2}} = \frac{c^2}{\alpha^2 + \beta^2}.$$
Without loss of generality we may assume that $g'^2-f'^2 = 1$, i.e. the meridian curve $c$ is parametrized by the arc-length. So, we get
\begin{equation} \label{E:Eq-28}
\alpha^2 f^2 g'^2 + \beta^2 g^2 f'^2 = \frac{c^2}{\alpha^2 + \beta^2}.
\end{equation}
Denote $A = \ds{\frac{c^2}{\alpha^2 + \beta^2}}$. Now, using that $g'^2 = 1+f'^2$, from \eqref{E:Eq-28} we get
\begin{equation} \label{E:Eq-29}
f'^2 = \frac{A-\alpha^2 f^2}{\alpha^2 f^2+\beta^2 g^2}; \qquad g'^2 = \frac{A+\beta^2 g^2}{\alpha^2 f^2+\beta^2 g^2}.
\end{equation}
Note that the constant $A$ satisfies $A > \alpha^2 f^2$. Equalities \eqref{E:Eq-29} imply that 
$$(A+\beta^2 g^2)f'^2 = (A-\alpha^2 f^2)g'^2,$$
which is equivalent to 
$$\frac{f'}{\sqrt{A-\alpha^2 f^2}} = \varepsilon \frac{g'}{\sqrt{A+\beta^2 g^2}}, \quad \varepsilon = \pm 1.$$
Thus, integrating the last equality, we obtain:
$$
\int{\frac{df}{\sqrt{A-\alpha^2 f^2}}} = \varepsilon \int{\frac{dg}{\sqrt{A+\beta^2 g^2}}}
$$
and calculating these  integrals, we get
$$
\arcsin \ds{\frac{\alpha f}{\sqrt{A}}} = \varepsilon \frac{\alpha}{\beta} \ln \left \vert \beta g + \sqrt{A+\beta^2 g^2} \right \vert + C, \quad C = const.
$$
Now, setting $g =u$, we obtain that the function $f$ satisfies:
$$
\arcsin \ds{\frac{\alpha f}{\sqrt{A}}} = \varepsilon \frac{\alpha}{\beta} \ln \left \vert \beta u + \sqrt{A+\beta^2 u^2} \right \vert + C, \quad C = const.
$$

Consequently, in the case of a minimal  timelike general rotational surface of  first type, the meridian curve $c$ is given by formula \eqref{E:Eq-30}.

\vskip 2mm 
\textit{(ii)} In a similar way, for the timelike general rotational surface of second type
we obtain the following  Frenet-type derivative formulas with respect to the  frame field $\{x, y, n_1, n_2\}$, defined by \eqref{E:Eq-xy-2} and  \eqref{E:Eq-normal-2}: 
$$
\begin{array}{ll}
\vspace{2mm} 
\widetilde{\nabla}_xx =  \nu_1\,n_1; & \qquad \quad \widetilde{\nabla}_x n_1 =  -\nu_1\,x;\\
\vspace{2mm} 
\widetilde{\nabla}_xy =   \mu\,n_2;  & \qquad \quad \widetilde{\nabla}_x n_2 = \mu \,y;\\
\vspace{2mm} 
\widetilde{\nabla}_yx = \gamma\,y + \mu\,n_2;  & \qquad \quad \widetilde{\nabla}_y n_1= \nu_2\,y + \varphi\,n_2;\\
\vspace{2mm}
\widetilde{\nabla}_yy = \gamma\,x + \nu_2\,n_1; & \qquad \quad \widetilde{\nabla}_y n_2= -\mu \,x  -\varphi\,n_1,
\end{array}
$$
where  $\gamma$, $\mu$, $\nu_1$, $\nu_2$, $\varphi$ are smooth functions expressed in terms of $f(u)$ and $g(u)$ as follows:
\begin{equation} \label{Eq-functions-2}
\begin{array}{ll}
\vspace{2mm} \gamma = \ds{\frac{\alpha^2 f f' - \beta^2 g g'}{\sqrt{f'^2+g'^2}(\alpha^2 f^2 - \beta^2 g^2)}}; &  
\qquad \mu = \ds{\frac{\alpha \beta (fg'-gf')}{\sqrt{f'^2+g'^2}(\alpha^2f^2-\beta^2g^2)}};\\
\vspace{2mm} \nu_1 = \ds{\frac{f'' g' - f'g''}{(\sqrt{f'^2+g'^2})^3}}; & \qquad 
\nu_2 = \ds{\frac{\alpha^2fg' + \beta^2gf'}{\sqrt{f'^2+g'^2}(\alpha^2f^2-\beta^2g^2)}}; \\
\vspace{2mm} 
\varphi = \ds{-\frac{\alpha \beta (gg'+ff')}{\sqrt{f'^2+g'^2}(\alpha^2f^2-\beta^2g^2)}}. &
\end{array}
\end{equation}
Again, using that the connection $\widetilde{\nabla}$ of $\mathbb R^4_1$ is flat, from $\widetilde{R}(x,y,x) = 0$ and  $\widetilde{R}(x,y,y) = 0$ we obtain  the following equalities in the case of a minimal surface:
$$
x(\mu) = \nu_1\varphi - 2\mu\gamma; \qquad
x(\nu_1) = -2\gamma\nu_1-\mu\varphi,
$$
which imply $\gamma = \displaystyle{-\frac{1}{4}\, x\left(\ln(\mu^2 + \nu_1^2)\right)}$. On the other hand, it can easily be calculated that 
$\gamma = x\left(\ln\sqrt{-G}\right)$. Hence, we get $\displaystyle{\frac{1}{4}} \,x\left(\ln(\mu^2 + \nu_1^2)\right) + x\left(\ln\sqrt{-G}\right) = 0$, which implies
$$
x\left(G^2(\mu^2 + \nu_1^2)\right) = 0.
$$
Since $\mu$, $\nu_1$, and $G$ are functions depending only on $u$, we obtain
$G^2(\mu^2 + \nu_1^2) = c^2,$
where $c$ is a constant. In the case of a timelike general rotational surface of second type we have  $G = \alpha^2 f^2 - \beta^2 g^2$, and using the expressions of $\mu$ and $\nu_1$ given in  \eqref{Eq-functions-2}, we obtain:
$$
\ds{\frac{\alpha^2 \beta^2 (f'g - fg')^2 + (\alpha^2fg' + \beta^2gf')^2}{(f'^2 + g'^2)}} = c^2,
$$
which is equivalent to
$$
\ds{\frac{\alpha^2 f^2 g'^2 + \beta^2 g^2 f'^2}{(f'^2 + g'^2)}} = \frac{c^2}{\alpha^2 + \beta^2}.
$$
Without loss of generality we assume that $f'^2 + g'^2 = 1$, i.e. the meridian curve is parametrized by the arc-length. So, we get
$$\alpha^2 f^2 g'^2 + \beta^2 g^2 f'^2 = \frac{c^2}{\alpha^2 + \beta^2}.$$
Denote $A = \ds{\frac{c^2}{\alpha^2 + \beta^2}}$. Using that $g'^2 = 1-f'^2$, we obtain  
\begin{equation} \label{E:Eq-33}
f'^2 = \frac{A-\alpha^2 f^2}{\beta^2 g^2-\alpha^2 f^2}, \quad g'^2 = \frac{\beta^2 g^2-A}{\beta^2 g^2-\alpha^2 f^2}.
\end{equation} 
The constant $A$ satisfies $\alpha^2 f^2 < A < \beta^2 g^2$. It follows from \eqref{E:Eq-33} that 
$$(\beta^2 g^2-A)f'^2 = (A-\alpha^2 f^2)g'^2,$$
which is equivalent to
$$\frac{f'}{\sqrt{A-\alpha^2 f^2}} = \varepsilon \frac{g'}{\sqrt{\beta^2 g^2-A}}, \quad \varepsilon = \pm 1.$$
Integrating the last equality, we obtain:
$$
\int{\frac{d f}{\sqrt{A-\alpha^2 f^2}}} = \varepsilon \int{\frac{dg}{\sqrt{\beta^2 g^2-A}}},
$$
and calculating the integrals, we get
$$
\arcsin \ds{\frac{\alpha f}{\sqrt{A}}} = \varepsilon \frac{\alpha}{\beta} \ln \left \vert \beta g + \sqrt{\beta^2 g^2-A} \right \vert + C, \quad C = const.
$$
Consequently, after setting $g =u$, we obtain that in the case of a minimal timelike  general rotational surface of second type, the meridian curve is given by formula 
\eqref{E:Eq-34}. 
\end{proof}

\section{Timelike general rotational surfaces with constant mean curvature}

In this section we shall classify the timelike general rotational surfaces with non-zero constant mean curvature, i.e. $\langle H, H \rangle = const \neq 0$.
 The mean curvature vector field $H$ of a timelike general rotational surface of first type is expressed by formula \eqref{E:Eq-H}.
Hence, 
$$\langle H, H \rangle = \ds{\frac{ \left ( \left( f'g'' - f'' g' \right) (\alpha^2 f^2 + \beta^2 g^2) + \left( \alpha^2 f g'+ \beta^2 f'g \right) (f'\,^2- g'\,^2) \right )^2 }{4(\alpha^2 f^2 + \beta^2 g^2)^2( g'\,^2-f'\,^2)^3}}.$$

So, $\langle H, H \rangle = const$ if and only if the following equality is satisfied: 
\begin{equation} \label{E:Eq-39}
\ds{\frac{f'' g' - f'g''}{(\sqrt{g'^2-f'^2})^3}} = -\ds{\frac{\alpha^2fg' + \beta^2gf'}{(\alpha^2f^2+\beta^2g^2)(\sqrt{g'^2-f'^2})}} + c, \quad c  \neq 0.
\end{equation}

Assume that the meridian curve is parametrized by $f = f(u); g = u$. Then, equality  \eqref{E:Eq-39} takes the form
$$\ds{\frac{f''}{(\sqrt{1-f'^2})^3}} = - \ds{\frac{\alpha^2f + \beta^2uf'}{(\alpha^2f^2+\beta^2u^2)(\sqrt{1-f'^2})}} + c,$$
which is equivalent to
$$
\ds{\frac{f''}{1-f'^2}} = -\ds{\frac{\alpha^2f + \beta^2uf'}{\alpha^2f^2+\beta^2u^2}} + c \sqrt{1-f'^2},
$$
or equivalently
\begin{equation} \label{E:Eq-41}
\left ( \ln \left \vert \frac{1+f'}{1-f'}\right \vert \right )' = - 2 \ds{\frac{\alpha^2f + \beta^2uf'}{\alpha^2f^2+\beta^2u^2}} + 2 c \sqrt{1-f'^2}.
\end{equation} 

Similarly, the mean curvature vector field of a timelike general rotational surface of second type is expressed by  \eqref{E:Eq-H2}. Hence, $\langle H, H \rangle = const$ if and only if
$$\ds{\frac{f'' g' - f'g''}{(\sqrt{f'^2+g'^2})^3}} = \ds{\frac{(\alpha^2fg' + \beta^2gf')}{(\alpha^2f^2-\beta^2g^2)(\sqrt{f'^2 + g'^2})}} + c, \quad c \neq 0.$$
Assume that the meridian curve is parametrized by $f = f(u); g = u$. Then, the above  equation  takes the form 
$$
\ds{\frac{f''}{1+f'^2}} = \ds{\frac{\alpha^2f + \beta^2uf'}{\alpha^2f^2-\beta^2u^2}} + c \sqrt{1+f'^2},
$$
which is equivalent to
\begin{equation} \label{E:Eq-43}
({\rm arctan} f')' = \ds{\frac{\alpha^2f + \beta^2uf'}{\alpha^2f^2-\beta^2u^2}} + c \sqrt{1+f'^2}.
\end{equation}

Equations \eqref{E:Eq-41} and \eqref{E:Eq-43} describe analytically the class of timelike general rotational surfaces of  first and second type with constant mean curvature.

Finally, we proved the following result.

\begin{theorem} \label{T:CMC}
(i) The timelike general rotational surface of first type  has non-zero constant mean curvature if and only if, up to parametrization, the meridian curve is determined by $c: x(u) = \left(f(u), 0,  0, u\right)$, where $f(u)$ is a solution to the following differential equation:
\begin{equation*} 
\left ( \ln \left \vert \frac{1+f'}{1-f'}\right \vert \right )' = - 2 \ds{\frac{\alpha^2f + \beta^2uf'}{\alpha^2f^2+\beta^2u^2}} + 2 c \sqrt{1-f'^2}, \quad c \neq 0.
\end{equation*} 

(ii) The timelike general rotational surface of second type has non-zero constant mean curvature if and only if, up to parametrization, the meridian curve is determined by $c: x(u) = \left(f(u), 0,  u, 0\right)$, where $f(u)$ is a solution to the following differential equation:
\begin{equation*} 
({\rm arctan} f')' = \ds{\frac{\alpha^2f + \beta^2uf'}{\alpha^2f^2-\beta^2u^2}} + c \sqrt{1+f'^2}, \quad c \neq 0.
\end{equation*}
\end{theorem}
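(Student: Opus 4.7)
The approach is to extract the constant-mean-curvature condition directly from the explicit formulas \eqref{E:Eq-H} and \eqref{E:Eq-H2} for the mean curvature vector fields of $\mathcal{M}_1$ and $\mathcal{M}_2$. In both cases $H$ is a scalar multiple of a unit normal vector ($n_2$ for the first type, $n_1$ for the second), so $\langle H, H\rangle$ is just the square of that scalar coefficient, and the condition $\langle H, H\rangle = \mathrm{const} \ne 0$ is equivalent to this scalar being a nonzero constant $c$ (up to a sign that can be absorbed into $c$).

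For part (i), setting the coefficient of $n_2$ in \eqref{E:Eq-H} equal to $c$ and rearranging yields equation \eqref{E:Eq-39}. Next I would exploit the freedom to reparametrize the meridian curve: since $g'^2 > f'^2 \ge 0$ on a first-type meridian, $g'$ never vanishes on a connected piece, so one may take $g(u) = u$, i.e.\ $g' = 1$, $g'' = 0$. Substituting into \eqref{E:Eq-39} and multiplying through by $\sqrt{1 - f'^2}$ produces
\begin{equation*}
\frac{f''}{1 - f'^2} \;=\; -\,\frac{\alpha^2 f + \beta^2 u f'}{\alpha^2 f^2 + \beta^2 u^2} + c\sqrt{1 - f'^2}.
\end{equation*}
The final step is to recognize the algebraic identity $\bigl(\ln\bigl|\tfrac{1+f'}{1-f'}\bigr|\bigr)' = \tfrac{2 f''}{1 - f'^2}$, which on multiplying by $2$ converts this equation into the ODE claimed in (i).

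For part (ii), the strategy is entirely analogous starting from \eqref{E:Eq-H2}. Equating the coefficient of $n_1$ with $c$ and using the reparametrization $g(u) = u$ (now automatic from $f'^2 + g'^2 > 0$) leads, after dividing by $\sqrt{1 + f'^2}$, to
\begin{equation*}
\frac{f''}{1 + f'^2} \;=\; \frac{\alpha^2 f + \beta^2 u f'}{\alpha^2 f^2 - \beta^2 u^2} + c\sqrt{1 + f'^2}.
\end{equation*}
The identity $(\arctan f')' = \tfrac{f''}{1 + f'^2}$ then immediately produces the ODE in (ii).

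The main obstacles are bookkeeping rather than conceptual: one must carefully track the signs when extracting $c$ from the square-root $\sqrt{\langle H, H\rangle}$ (noting that the statement of the theorem already allows $c$ to be an arbitrary nonzero real), and one must verify that the reparametrization $g(u) = u$ is legitimate on a connected piece of the meridian under the relevant signature constraints. No tool beyond the already-computed formulas for $H$ and these two elementary derivative identities is needed.
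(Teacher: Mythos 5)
Your proposal is correct and follows essentially the same route as the paper: compute $\langle H,H\rangle$ from the explicit formulas \eqref{E:Eq-H} and \eqref{E:Eq-H2}, observe that constancy is equivalent to the scalar coefficient of the unit normal being a nonzero constant $c$, set $g(u)=u$, and rewrite via the identities $\bigl(\ln\bigl|\tfrac{1+f'}{1-f'}\bigr|\bigr)'=\tfrac{2f''}{1-f'^2}$ and $(\arctan f')'=\tfrac{f''}{1+f'^2}$. The paper's intermediate equation \eqref{E:Eq-39} and the subsequent manipulations match yours line for line.
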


\section{Timelike general rotational surfaces with parallel normalized mean curvature vector field} \label{S:parallel}

In this section, we shall give explicitly all timelike general rotational surfaces with parallel normalized mean curvature vector field. Let us recall that a surface  has \textit{parallel normalized mean curvature vector field} if the mean curvature vector field $H$ is non-zero and  there exists a unit vector field in the direction of $H$ 
which is parallel.

\vskip 2mm
Now, let $\mathcal{M}_1$ be  a  timelike general rotational surface of first  type, defined by  \eqref{E:Eq-6}. In the case $H \neq 0$ the normalized mean curvature vector field of $\mathcal{M}_1$ is $n_2$, where $n_2$ is determined in \eqref{E:Eq-normal}. 
So, $\mathcal{M}_1$ has parallel normalized mean curvature vector field if and only if the following equalities are satisfied: $D_x n_2 = 0$ and $D_y n_2 = 0$.

Similarly, if $\mathcal{M}_2$ is a timelike general rotational surface of second type, defined by \eqref{E:Eq-8}, then, $\mathcal{M}_2$ has parallel normalized mean curvature vector field if and only if $D_x n_1 = 0$ and $D_y n_1 = 0$, where $n_1$ is given in \eqref{E:Eq-normal-2}. 

The next theorem describes all timelike general rotational surfaces of first and second type with parallel normalized mean curvature vector field.

\begin{theorem} \label{T:parallel}
(i) The timelike general rotational surface of first type  has parallel normalized mean curvature vector field if and only if, up to parametrization, the meridian curve is determined by $c: x(u) = \left(f(u), 0,  0, u\right)$, where 
\begin{equation*}
f(u) = \pm \sqrt{u^2 + C^2}; \quad C = const \neq 0.
\end{equation*}

(ii) The timelike general rotational surface of second type has parallel normalized mean curvature vector field  if and only if, up to parametrization, the meridian curve is determined by $c: x(u) = \left(f(u), 0,  u, 0\right)$, where 
\begin{equation*}
f(u) = \pm \sqrt{C^2 - u^2}; \quad u \in (-C, C); \quad C = const \neq 0.
\end{equation*}
\end{theorem}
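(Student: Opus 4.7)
The strategy is to observe that all the heavy lifting has already been done in the preceding sections: from formula \eqref{E:Eq-H}, the mean curvature vector field of $\mathcal{M}_1$ is aligned with the unit normal $n_2$, so whenever $H\neq 0$ the normalized mean curvature vector field equals $\pm n_2$. Consequently $\mathcal{M}_1$ has parallel normalized mean curvature vector field if and only if $D n_2 = 0$, i.e. $D_x n_2 = 0$ and $D_y n_2 = 0$. An analogous reduction holds for $\mathcal{M}_2$ with $n_1$ in place of $n_2$, using \eqref{E:Eq-H2}.

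For the first type, I would substitute the explicit normal connection formulas \eqref{E:Eq-normal_con}. The equation $D_x n_2 = 0$ is automatic. The equation $D_y n_2 = 0$ simplifies to
\[
g\,g' - f\,f' = 0,
\]
which is a perfect derivative: $(g^2 - f^2)' = 0$. Hence $f^2 - g^2 = C^2$ for some real constant. Setting $g(u)=u$ as the parametrization (which is permissible since the condition $f'^2 - g'^2 < 0$ forbids $g'$ from vanishing and one can reparametrize), one obtains $f(u) = \pm\sqrt{u^2 + C^2}$. For the second type, the analogous computation using \eqref{E:Eq-normal_con-2} gives $f\,f' + g\,g' = 0$, hence $f^2+g^2 = C^2$, and with $g(u)=u$ one concludes $f(u) = \pm\sqrt{C^2-u^2}$ on $(-C,C)$.

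Finally, I need to confirm $C \neq 0$ by checking both the admissibility of the parametrization and that $H \neq 0$ (otherwise the normalized mean curvature vector is not defined). In the first type, $C=0$ forces $f = \pm u$, which violates the timelike condition $f'^2 - g'^2 < 0$. When $C\neq 0$ one computes $1 - f'^2 = C^2/f^2 > 0$ and then verifies via \eqref{E:Eq-H} that the scalar factor of $H$ reduces to a non-vanishing expression (concretely, $-2C^2(\alpha^2 f^2 + \beta^2 u^2)/f^3$ up to positive factors), so $H\neq 0$. The second type is treated symmetrically, with $C=0$ eliminated because it would collapse the meridian to a point.

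I expect the main (mild) obstacle to be only the last verification: ensuring that for the solutions $f = \pm\sqrt{u^2\pm C^2}$ the mean curvature vector field does not vanish, which is a short but necessary calculation using formulas \eqref{E:Eq-H} and \eqref{E:Eq-H2}. The rest is a direct integration of a single first-order ODE obtained by vanishing of one entry of the normal connection matrix, so no additional machinery beyond what is already set up in Sections~3 and~5 of the paper is required.
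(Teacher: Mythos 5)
Your proposal is correct and follows essentially the same route as the paper: reduce to $D_yn_2=0$ (resp.\ $D_yn_1=0$) via \eqref{E:Eq-normal_con} and \eqref{E:Eq-normal_con-2}, integrate $ff'-gg'=0$ (resp.\ $ff'+gg'=0$), set $g=u$, and rule out $C=0$ using the causal-character inequalities. Your additional check that $H\neq 0$ on the resulting solutions (the numerator reducing to $\mp 2C^2(\alpha^2f^2\pm\beta^2u^2)/f^3$) is a worthwhile detail that the paper leaves implicit, since the definition of parallel normalized mean curvature vector field requires $H\neq 0$.
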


\begin{proof}
\textit{(i)} Let  ${\mathcal M}_1$  be a  timelike general rotational surface of first  type, defined by  \eqref{E:Eq-6}. In Section 5, we derived formulas  \eqref{E:Eq-normal_con} giving the  normal connection $D$ of ${\mathcal M}_1$. These formulas  imply that 
 $D_x n_2 = D_y n_2 = 0$ if and only if the functions $f(u)$ and $g(u)$ satisfy the following differential equation:
$$f f'- g g'= 0.$$ 
The above equation implies that the functions $f$ and $g$ are related by  $f^2 = g^2 + C_1$ for some constant $C_1$. Without loss of generality we may assume that $g(u) = u$. Then  $f(u) = \pm \sqrt{u^2 + C_1}$. Since $f'^2 - g'^2 <0$, we obtain $C_1 > 0$. 
Hence, $f(u) = \pm \sqrt{u^2 + C^2}$ for some constant $C \neq 0$.

\vskip 2mm
\textit{(ii)} In a similar way, having in mind formulas \eqref{E:Eq-normal_con-2} giving the  normal connection $D$ of ${\mathcal M}_2$, we obtain that the timelike general rotational surface of second type has parallel normalized mean curvature vector field if and only if the  functions $f(u)$ and $g(u)$ satisfy the following differential equation:
$$f f'+ g g'= 0,$$ 
which implies that  $f^2 + g^2 = C_1$ for some constant $C_1$. Again we  assume that $g(u) = u$. 
Then  $f(u) = \pm \sqrt{- u^2 + C_1}$. Since $f'^2 + g'^2 >0$, we obtain $C_1 > 0$. Hence, $f(u) = \pm \sqrt{C^2 - u^2}$ for some constant $C \neq 0$.
\end{proof}

\vskip 6mm 
\textbf{Acknowledgments:}
The  authors are partially supported by the National Science Fund,
Ministry of Education and Science of Bulgaria under contract KP-06-N52/3.

\vskip 6mm

\end{document}